\documentclass[11pt]{amsart}

\usepackage{amssymb,amsmath}
\usepackage{bbm,accents}
\usepackage{a4wide}

\usepackage[colorlinks=true, pdfstartview=FitV, linkcolor=blue, 
citecolor=blue]{hyperref}

\newtheorem{theorem}{Theorem}[section]
\newtheorem{prop}[theorem]{Proposition}
\newtheorem{lemma}[theorem]{Lemma}
\newtheorem{coro}[theorem]{Corollary}
\theoremstyle{definition}

\newcommand{\ts}{\hspace{0.5pt}}
\newcommand{\nts}{\hspace{-0.5pt}}
\newcommand{\CC}{\mathbb{C}}
\newcommand{\RR}{\mathbb{R}\ts}
\newcommand{\NN}{\mathbb{N}}

\newcommand{\dd}{\,\mathrm{d}}
\newcommand{\ee}{\ts\mathrm{e}}
\newcommand{\ii}{\ts\mathrm{i}}
\newcommand{\one}{\mathbbm{1}}
\newcommand{\trans}{{\scriptscriptstyle \mathsf{T}}}
\newcommand{\udo}[1]{\underaccent{$\text{.}$}{#1\ts}\nts}

\newcommand{\diag}{\mathrm{diag}}
\newcommand{\Mat}{\mathrm{Mat}}

\newcommand{\defeq}{\mathrel{\mathop:}=}

\newcommand{\myfrac}[2]{\frac{\raisebox{-2pt}{$#1$}}
  {\raisebox{0.5pt}{$#2$}}}

\begin{document}

\title[Matrix functions of triangular matrices]
{An alternative recursive approach to\\[2mm] 
  functions of simple triangular matrices}

\author{Ellen Baake}
\address{Technische Fakult\"at, Universit\"at Bielefeld, 
         Postfach 100131, 33501 Bielefeld, Germany}

\author{Michael Baake}
\address{Fakult\"at f\"ur Mathematik, Universit\"at Bielefeld, 
         Postfach 100131, 33501 Bielefeld, Germany}

\begin{abstract} 
  The computation of matrix functions is a well-studied problem. Of
  special importance are the exponential and the logarithm of a
  matrix, where the latter also raises existence and uniqueness
  questions. This is particularly relevant in the context of matrix
  semigroups and their generators.  Here, we look at matrix functions
  of triangular matrices, where a recursive approach is possible when
  the matrix has simple spectrum. The special feature is that no
  knowledge of eigenvectors is required, and that the same recursion
  applies to the computation of multiple functions or semigroups
  simultaneously.
\end{abstract}

\keywords{Matrix functions, triangular matrices, recursive methods,
    Markov semigroups}
\subjclass[2010]{15A16, 15B51}

\maketitle

\section{Introduction}

For any matrix $B\in\Mat(d,\CC)$, its matrix exponential is
given by the convergent series
\begin{equation}\label{eq:exp-series}
  \exp (B) \, = \sum_{n=1}^{\infty} \myfrac{1}{n \ts !} B^{n} .
\end{equation}
There are many ways to determine $\exp (B)$; see \cite[Ch.~10]{Higham}
or \cite{M1, M2} and references therein for thorough expositions. By
an application of the Cayley--Hamilton theorem, it is clear that
$\exp (B)$ is a polynomial in $B$ of degree at most $d-1$. When $B$
has distinct eigenvalues (simple spectrum), or when it is cyclic
(meaning that characteristic and minimal polynomial agree), the degree
is exactly $d-1$, and the coefficients can be computed from the
spectral mapping theorem and the inverse of the Vandermonde matrix.

In practice, several other methods are employed or preferred. The
matrix $B$ is often brought to triangular form, because the
calculation of the exponential of triangular matrices is a standard
problem. Numerically, when the spectrum is simple, it is frequently
treated with the Schur--Parlett algorithm, as described in
\cite[Chs.~9 and 10]{Higham}; see also \cite{HL} for recent
developments.  Here, we present an independent variant that is
motivated by a recursion used in \cite{BBS} in a similar
context. While it has a number of similarities with the Schur--Parlett
algorithm, its initial (or pre-processing) step does \emph{not}
require any input from the function to be computed. Independently of
any numerical implementation, this is of theoretical interest, for
instance for the treatment of matrix semigroups or families of
functions.  Below, we work with lower-triangular matrices and note
that the corresponding results for upper-triangular matrices are
completely analogous.

When a real or complex matrix $A$ has spectral radius
$\varrho^{}_{A} < 1$, the principal matrix logarithm $R$ of $M=\one+A$
is given by the convergent series
\begin{equation}\label{eq:log-series}
  R \, = \, \log (\one+A) \, = \sum_{n=1}^{\infty}
     \frac{(-1)^{n-1}}{n} A^{n} .
\end{equation}
More generally, one can use the integral representation
\cite[Thm.~11.1]{Higham}
\[
   R \, = \int_{0}^{1} \! A \ts (\one + \tau A)^{-1} \dd \tau \ts ,
\]
which is valid whenever no eigenvalue of $A$ lies in
$(-\infty, -1]$.

Below, we revisit the recursive approach to matrix functions of
triangular matrices. The key result is the derivation of a recursion
for a set of coefficients that only depend on the given matrix $B$ and
can then be used for any matrix function of it, provided $B$ has
simple spectrum. The advantage then is that different functions of
$B$, or parameter-dependent ones such as $\exp (t B)$, can be computed
with the same set of coefficients.

The recursive structure is derived in Section~\ref{sec:rec}, which
leads to Proposition~\ref{prop:theta-rec}. In Section~\ref{sec:exp},
this is followed by a more detailed analysis of the matrix exponential
(Theorem~\ref{thm:exp}), where we also take a closer look at the class
of Markov semigroups, and then state the result for general matrix
functions (Corollary~\ref{coro:fun}). Finally, Section~\ref{sec:log}
analyses the matrix logarithm in more detail, aiming at the extraction
of the generator of a Markov semigroup (Theorem~\ref{thm:log} and
Corollary~\ref{coro:rows}).

\section{Recursive structure of triangular matrices}\label{sec:rec}
  
In what follows, empty sums are always zero, or simply understood to
be absent. Further, we mark summation indices by an `underdot' for
clarity.

We begin with the following observation.  Let
$B =(b^{}_{ij})^{}_{1\leqslant i,j \leqslant d}$ be a real or complex
matrix in lower-triangular form, hence with eigenvalues $b^{}_{ii}$.
Assume further that it is diagonalisable,
\[
     B \, = \, U \diag (b^{}_{11}, \ldots , b^{}_{dd} ) \ts U^{-1} ,
\] 
where $U$, and hence also $U^{-1}$, is then lower triangular as well;
compare \cite[Thm.~2.3.1]{HJ}. In fact, the columns of $U$ constitute
a basis of right eigenvectors of $B$, while the rows of $U^{-1}$ are
the corresponding left eigenvectors. Note that $B$ real implies that
$U$ can also be chosen as a real matrix, because the eigenvector
equations can then be solved over the reals. In either case, the
matrix elements of $B$ satisfy
\begin{equation}\label{eq:lin-comb}
    b^{}_{nm} \, = \sum_{m \leqslant \udo{k} \leqslant n} \!
       u^{}_{nk} \ts \tilde{u}^{}_{km} \, b^{}_{kk}   \ts ,
\end{equation}
where the $u^{}_{ij} $ and $\tilde{u}^{}_{ij}$ denote the elements of
$U$ and $U^{-1}$, respectively.  This gives all matrix elements of $B$
as linear combinations of the diagonal ones. While there is a lot of
freedom in the choice of $U$, the linear combinations will be blind to
most of this. Let us therefore define the coefficients
\begin{equation}\label{eq:theta-def}
    \vartheta^{}_{n} (k,m) \, \defeq \, 
       u^{}_{nk} \ts \tilde{u}^{}_{km} \ts ,
\end{equation}
which are well defined once $U$ is fixed. They can only be non-zero
for $m\leqslant k \leqslant n$, due to the triangular structure of $U$
and $U^{-1}$, with $\vartheta^{}_{n} (n,n)=1$. We collect some
properties as follows.

\begin{lemma}\label{lem:orth}
  Let\/ $B=(b^{}_{ij})^{}_{1\leqslant i,j \leqslant d}$ be a real or
  complex matrix in lower-triangular form that is
  diagonalisable. Then, one has
\begin{equation}\label{eq:lem-1}
    b^{}_{nm} \, = \sum_{m\leqslant \udo{k} \leqslant n}
      \! b^{}_{kk} \, \vartheta^{}_{n} (k,m)
\end{equation}
for all\/ $1\leqslant m,n \leqslant d$, with the coefficients from
Eq.~\eqref{eq:theta-def}. They satisfy the two relations
\begin{equation}\label{eq:lem-2}
   \sum_{m\leqslant \udo{\ell} \leqslant n} \! 
   \vartheta^{}_{n} (\ell,m) \, = \, \delta^{}_{nm}   
   \quad \text{and} \quad \!
    \sum_{m\leqslant \udo{i} \leqslant k} \! 
    \vartheta^{}_{n} (k,i) \, \vartheta^{}_{i} (\ell,m)
    \, = \, \delta^{}_{k \ell} \, \vartheta^{}_{n} (k,m) \ts .
\end{equation}
Further, for\/ $k\leqslant m \leqslant n$, one also has the identity
\begin{equation}\label{eq:lem-3}
  \sum_{k\leqslant \udo{\ell} \leqslant n} \! 
  b^{}_{n\ell} \, \vartheta^{}_{\ell} (k,m) \, = \,
  b^{}_{kk}\, \vartheta^{}_{n} (k,m) \ts .
\end{equation}
\end{lemma}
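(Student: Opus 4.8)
The plan is to recognise each of the three claimed identities as a single entry of a suitable matrix product, exploiting the defining relation $B = U D U^{-1}$ with $D = \diag (b^{}_{11}, \ldots, b^{}_{dd})$, together with $U U^{-1} = U^{-1} U = \one$ and the lower-triangular structure of $U$ and $U^{-1}$. The coefficient definition \eqref{eq:theta-def} is precisely engineered so that each sum factors into a product of $U$- and $U^{-1}$-entries that telescopes.

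First, for \eqref{eq:lem-1}, I would expand $b^{}_{nm} = (U D U^{-1})^{}_{nm}$ and use that $D$ is diagonal to collapse the double sum to $\sum_{k} u^{}_{nk} \ts b^{}_{kk} \ts \tilde{u}^{}_{km}$. Definition \eqref{eq:theta-def} then gives the stated form at once. The only point to record is that $u^{}_{nk}$ vanishes unless $k \leqslant n$ and $\tilde{u}^{}_{km}$ vanishes unless $m \leqslant k$, by triangularity, which pins down the summation range $m \leqslant k \leqslant n$. This step merely re-derives \eqref{eq:lin-comb}.

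For the two relations in \eqref{eq:lem-2}, I would again read each sum as a matrix-product entry. The first equals $\sum_{\ell} u^{}_{n\ell} \ts \tilde{u}^{}_{\ell m} = (U U^{-1})^{}_{nm} = \delta^{}_{nm}$. For the second, substituting \eqref{eq:theta-def} and pulling out the factors independent of $i$ yields $u^{}_{nk} \ts \tilde{u}^{}_{\ell m} \sum_{i} \tilde{u}^{}_{ki} \ts u^{}_{i\ell} = u^{}_{nk} \ts \tilde{u}^{}_{\ell m} \ts (U^{-1} U)^{}_{k\ell} = \delta^{}_{k\ell} \ts u^{}_{nk} \ts \tilde{u}^{}_{km}$, which is exactly the right-hand side. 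Both follow purely from $U U^{-1} = U^{-1} U = \one$; triangularity enters only to match the displayed summation limits.

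Finally, for \eqref{eq:lem-3}, I would use the equivalent form $B U = U D$ of the diagonalisation. Substituting the definition of $\vartheta^{}_{\ell}(k,m)$ and factoring out $\tilde{u}^{}_{km}$ gives $\tilde{u}^{}_{km} \sum_{\ell} b^{}_{n\ell} \ts u^{}_{\ell k} = \tilde{u}^{}_{km} \ts (B U)^{}_{nk} = \tilde{u}^{}_{km} \ts (U D)^{}_{nk} = \tilde{u}^{}_{km} \ts u^{}_{nk} \ts b^{}_{kk} = b^{}_{kk} \ts \vartheta^{}_{n}(k,m)$. I do not anticipate any genuine obstacle: all three statements are one-line consequences of matrix algebra once the sums are identified as entries of $U U^{-1}$, $U^{-1} U$, or $B U = U D$. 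The only care required is the bookkeeping of the triangular supports of the factors, so that the summation bounds in the displays are correct and no spurious terms are retained.
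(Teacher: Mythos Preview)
Your proposal is correct and, for \eqref{eq:lem-1} and the two relations in \eqref{eq:lem-2}, identical to the paper's proof: both read the sums as the $(n,m)$-entry of $UDU^{-1}$, of $UU^{-1}$, and (after factoring) of $U^{-1}U$, respectively.

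The only difference is in \eqref{eq:lem-3}. You factor out $\tilde{u}^{}_{km}$ and recognise the remaining sum as $(BU)^{}_{nk} = (UD)^{}_{nk}$, i.e.\ you invoke the eigenvector relation $BU=UD$ directly. The paper instead substitutes the expansion \eqref{eq:lem-1} for $b^{}_{n\ell}$ into the left-hand side and then collapses the resulting double sum via the second relation in \eqref{eq:lem-2}. Your route is marginally more direct; the paper's has the small conceptual payoff of exhibiting \eqref{eq:lem-3} as a purely formal consequence of \eqref{eq:lem-1} and \eqref{eq:lem-2}, without appealing again to the underlying matrix identity. Either way the argument is a one-liner, so the distinction is cosmetic.
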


\begin{proof}
  The initial claim and \eqref{eq:lem-1} follow from our previous
  observation in \eqref{eq:lin-comb}.  Next, the first relation in
  \eqref{eq:lem-2} is nothing but $U U^{-1}=\one$, and easily follows
  from Eq.~\eqref{eq:theta-def}, while the second derives from the
  same equation via $U^{-1} U = \one$.
  
  The identity in \eqref{eq:lem-3} now easily follows from replacing
  the $b^{}_{n\ell}$ under the sum by the linear combination from
  \eqref{eq:lem-1}, followed by an application of the second relation
  in \eqref{eq:lem-2}.
\end{proof}

When the spectrum of $B$ is simple, the eigenvectors for each
eigenvalue are unique up to a non-zero multiple. The
$\vartheta$-coefficients are unaffected by this degree of freedom, and
are thus unique. Then, one can solve identity \eqref{eq:lem-3} from
Lemma~\ref{lem:orth} for $\vartheta^{}_{n} (k,m)$ with $k<n$ to obtain
\begin{equation}\label{eq:rec}
    \vartheta^{}_{n} (k,m) \, = \,
    \frac{1}{b^{}_{kk} \nts - b^{}_{nn}}
    \sum_{k\leqslant \udo{\ell} < n} \! b^{}_{n\ell} \,
    \vartheta^{}_{\ell} (k,m) \ts ,
\end{equation}
which, together with the first relation from Lemma~\ref{lem:orth},
permits to determine the $\vartheta$-coefficients \emph{recursively},
without prior (explicit) knowledge of the eigenvectors of $B$.

\begin{prop}\label{prop:theta-rec}
  Let\/ $B$ be a matrix as in Lemma~$\ref{lem:orth}$, and assume that
  it has simple spectrum. Then, the\/ $\ts\vartheta$-coefficients are
  unique, and can be computed recursively via
\[
  \vartheta^{}_{n} (k,m) \, = \, \delta^{}_{nk} \Bigl( \delta^{}_{nm}
  \, - \! \sum_{m\leqslant \udo{\ell} < n} \! \vartheta^{}_{n}
     (\ell,m) \Big) \, + \, \frac{1}{b^{}_{kk}\nts - b^{}_{nn}}
     \sum_{k\leqslant \udo{\ell} < n} \! b^{}_{n\ell} \,
     \vartheta^{}_{\nts\ell} (k,m) \ts ,
\]
where the last sum is understood to be absent for\/ $k=n$.  This
encodes a complete recursion of the\/ $\vartheta^{}_{n} (k,m)$ for
$1\leqslant m \leqslant k \leqslant n$, with the initial conditions\/
$\vartheta^{}_{n} (n,n)=1$ for\/ $1\leqslant n \leqslant d$.
\end{prop}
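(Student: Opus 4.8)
The plan is to establish the statement in three steps: uniqueness of the $\vartheta$-coefficients, correctness of the single displayed formula, and its well-foundedness as a genuine recursion. Most of the analytic content is already contained in Lemma~\ref{lem:orth} and in the derivation of Eq.~\eqref{eq:rec}, so the proof amounts to assembling these pieces and checking the dependency structure.

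For uniqueness I would make precise the remark preceding the proposition. Since $B$ is lower triangular with diagonal entries $b^{}_{kk}$ and $U$ is required to be lower triangular as well, the $k$-th column of $U$ must be the right eigenvector for the eigenvalue $b^{}_{kk}$, and the $k$-th row of $U^{-1}$ the corresponding left eigenvector. With simple spectrum these are determined up to a single non-zero scalar, so the only remaining freedom is to replace $U$ by $U D$ with $D=\diag (1,\ldots,\lambda,\ldots,1)$ carrying $\lambda\neq 0$ in position $k$; then $U^{-1}$ becomes $D^{-1}U^{-1}$. This sends $u^{}_{nk}\mapsto \lambda\ts u^{}_{nk}$ and $\tilde{u}^{}_{km}\mapsto \lambda^{-1}\tilde{u}^{}_{km}$, leaving the product $\vartheta^{}_{n}(k,m)=u^{}_{nk}\ts\tilde{u}^{}_{km}$ from Eq.~\eqref{eq:theta-def} invariant. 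Hence the coefficients depend on $B$ alone.

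For correctness I would simply split the displayed formula according to whether $k=n$ or $k<n$. When $k=n$, the factor $\delta^{}_{nk}=1$ and the last sum is declared absent (which is consistent, as its index range $n\leqslant \udo{\ell}<n$ is empty, while the coefficient $1/(b^{}_{kk}\nts-b^{}_{nn})$ would otherwise be ill-defined); the formula then reads $\vartheta^{}_{n}(n,m)=\delta^{}_{nm}-\sum_{m\leqslant \udo{\ell}<n}\vartheta^{}_{n}(\ell,m)$, which is exactly the first relation of Lemma~\ref{lem:orth} in \eqref{eq:lem-2} solved for the top term $\ell=n$. When $k<n$, the factor $\delta^{}_{nk}=0$ removes the first term, and since the spectrum is simple we have $b^{}_{kk}\neq b^{}_{nn}$, so the denominator is non-zero and the remaining expression coincides with Eq.~\eqref{eq:rec}, obtained by solving the identity \eqref{eq:lem-3} for $\vartheta^{}_{n}(k,m)$.

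For completeness I would verify that unrolling the formula is well-founded by induction on the outer index $n$, with $m$ held fixed. For $k<n$ the right-hand side calls only $\vartheta^{}_{\ell}(k,m)$ with $\ell<n$, i.e.\ values from strictly earlier stages; for $k=n$ it calls $\vartheta^{}_{n}(\ell,m)$ with $\ell<n$, which belong to the same stage but carry a strictly smaller middle argument. Processing $n=1,\ldots,d$ and, within each stage, the values $\vartheta^{}_{n}(k,m)$ in order of increasing $k$, every quantity appearing on the right-hand side has already been computed, and the data $\vartheta^{}_{n}(n,n)=1$ anchor the recursion. I expect the only real subtlety to be exactly this dependency ordering: one must notice that the diagonal case $k=n$ has to be evaluated \emph{last} within each stage, since it is the unique case that refers back to same-stage coefficients, whereas all off-diagonal cases reach only into previous stages.
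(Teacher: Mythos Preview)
Your proposal is correct and follows essentially the same approach as the paper: uniqueness via the scalar freedom in the eigenvectors cancelling in Eq.~\eqref{eq:theta-def}, the formula obtained by combining Eq.~\eqref{eq:rec} for $k<n$ with the first relation in \eqref{eq:lem-2} for $k=n$, and then checking that this yields a complete recursion from the initial data $\vartheta^{}_{n}(n,n)=1$. Your treatment of the dependency ordering (increasing $n$, then increasing $k$ within each stage so that the case $k=n$ is handled last) is more explicit than the paper's, which simply asserts that the recursion is easily checked to be complete.
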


\begin{proof}
  When $B$ has simple spectrum, the claimed uniqueness follows from
  Eq.~\eqref{eq:theta-def}, because the eigenvectors are unique up to
  an overall factor, which cancels in each term.

  Further, as the eigenvalues are then distinct, we may use
  Eq.~\eqref{eq:rec} and augment it with the relation for $k=n$, taken
  from the first relation in \eqref{eq:lem-2}. This gives the first
  term, while the second is then absent due to our convention that the
  empty sum is zero.

  Since we also have $\vartheta^{}_{n} (n,n) = 1$ for all
  $1\leqslant n \leqslant d$, which serve as the initial conditions,
  it is easy to check that we obtain a complete recursion for the
  $\vartheta$-coefficients.
\end{proof}

Let us point out that the recursive structure will primarily be of
theoretical interest, as its numerical use might suffer from the
standard small denominator problem.

\section{Exponential and other matrix functions}\label{sec:exp}

Proposition~\ref{prop:theta-rec} permits the computation of matrix
functions of $B$ via the diagonal elements and the
$\vartheta$-coefficients. To illustrate this, let us consider
$P(t) = \ee^{t B}$ with $t \in \RR$ and $B$ a real matrix.  Then, we
see that $B$ having simple spectrum implies the relations
\[
     \bigl( P(t) \bigr)_{nm} \, = \sum_{m \leqslant \udo{k} \leqslant n} 
     \! \ee^{t \ts b^{}_{kk}} \, \vartheta^{}_{n} (k,m) 
\]
for all $m,n$ and all $t$. For $t=0$, this simply reflects that
$U U^{-1}=\one$. We thus have the following result, which is formulated
with Markov semigroups in mind.

\begin{theorem}\label{thm:exp}
  Let\/ $B = (b^{}_{ij})^{}_{1\leqslant i,j \leqslant d}$ be a real
  matrix in lower-triangular form, with distinct diagonal entries.
  Then,
  $P(t) = \ee^{t B}= (p^{}_{ij} (t))^{}_{1\leqslant i,j \leqslant d}$
  with\/ $t\in\RR$ is lower triangular and, for\/ $t\ne 0$, has only
  simple eigenvalues. Further, for all\/ $t\in\RR$, one has
\[
  p^{}_{nm} (t) \; = \sum_{m\leqslant \udo{k} \leqslant n} \!
   \ee^{t \ts b^{}_{kk}} \, \vartheta^{}_{n}(k,m)  \, = \,
  \delta^{}_{nm} \ee^{t \ts b^{}_{nn}} \, + \! \sum_{m\leqslant \udo{k} < n}\!
  \vartheta^{}_{n} (k,m) \bigl( \ee^{t \ts b^{}_{kk}} 
    - \ee^{t \ts b^{}_{nn}}\bigr)
\]
with the recursively{\ts}-defined coefficients from
Proposition~$\ref{prop:theta-rec}$.
\end{theorem}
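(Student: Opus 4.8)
The plan is to establish the three assertions of Theorem~\ref{thm:exp} in turn, starting from the structural facts about $B$ and then invoking the $\vartheta$-machinery. First I would verify that $P(t) = \ee^{tB}$ is lower triangular: since $B$ is lower triangular, so is every power $B^n$, and hence so is the convergent series \eqref{eq:exp-series} defining the exponential. The diagonal entries of $P(t)$ are then simply $\ee^{t\ts b^{}_{nn}}$, obtained by exponentiating the diagonal of $tB$, so for $t \ne 0$ these are distinct precisely because the $b^{}_{nn}$ are distinct and $x \mapsto \ee^{tx}$ is injective on $\RR$; this gives the simple-spectrum claim for $P(t)$.

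The central identity is the spectral representation of $p^{}_{nm}(t)$. Here I would argue as follows: because $B$ has simple spectrum it is diagonalisable as $B = U \diag(b^{}_{11},\ldots,b^{}_{dd})\ts U^{-1}$, and applying the analytic function $\ee^{t\,\cdot}$ through the spectral mapping theorem yields
\[
  P(t) \, = \, U \diag\bigl( \ee^{t\ts b^{}_{11}}, \ldots, \ee^{t\ts b^{}_{dd}}\bigr)\ts U^{-1} .
\]
Reading off the $(n,m)$ entry and using $\vartheta^{}_{n}(k,m) = u^{}_{nk}\ts\tilde u^{}_{km}$ from \eqref{eq:theta-def} gives the first stated formula
\[
  p^{}_{nm}(t) \, = \sum_{m\leqslant \udo{k}\leqslant n}\! \ee^{t\ts b^{}_{kk}}\,\vartheta^{}_{n}(k,m) ,
\]
exactly as \eqref{eq:lem-1} arises in Lemma~\ref{lem:orth} but with $b^{}_{kk}$ replaced by $\ee^{t\ts b^{}_{kk}}$. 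This is the step I expect to be the crux, though it is not hard: the point is that the \emph{same} coefficients $\vartheta^{}_{n}(k,m)$ that linearise $B$ also linearise any primary matrix function of $B$, since such a function acts only through the eigenvalues in the diagonal factor while leaving $U$ and $U^{-1}$ untouched.

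Finally I would derive the second, difference form of the formula. The idea is to split off the diagonal term $k=n$ and then use the partition-of-unity relation $\sum_{m\leqslant \udo{k}\leqslant n}\vartheta^{}_{n}(k,m) = \delta^{}_{nm}$ from the first identity in \eqref{eq:lem-2}. Concretely, one writes
\[
  p^{}_{nm}(t) \, = \, \ee^{t\ts b^{}_{nn}}\!\!\sum_{m\leqslant \udo{k}\leqslant n}\!\vartheta^{}_{n}(k,m) \, + \!\sum_{m\leqslant \udo{k}\leqslant n}\!\vartheta^{}_{n}(k,m)\bigl(\ee^{t\ts b^{}_{kk}} - \ee^{t\ts b^{}_{nn}}\bigr) ,
\]
where the first sum collapses to $\delta^{}_{nm}\ee^{t\ts b^{}_{nn}}$ by \eqref{eq:lem-2}, and the $k=n$ term of the second sum vanishes identically since its bracket is zero, leaving the sum over $m\leqslant k < n$ as claimed. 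Throughout I would invoke Proposition~\ref{prop:theta-rec} only to guarantee that these $\vartheta^{}_{n}(k,m)$ are well defined and recursively computable under the simple-spectrum hypothesis; no new estimates are needed, as the entire argument is algebraic once the diagonalisation and the spectral mapping theorem are in hand.
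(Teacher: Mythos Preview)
Your proof is correct and follows essentially the same route as the paper: lower-triangularity from the exponential series, simplicity of the spectrum via injectivity of $x\mapsto\ee^{tx}$ on $\RR$ for $t\ne 0$, and the entry formula from the shared diagonalisation $P(t)=U\diag(\ee^{t\ts b^{}_{kk}})\ts U^{-1}$ together with the first relation in \eqref{eq:lem-2} to pass to the difference form. The only cosmetic difference is that the paper phrases the ``same $U$'' step via the commutativity of $B$ and $P(t)$, whereas you invoke the spectral mapping theorem directly; these are equivalent here.
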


\begin{proof}
  The lower-triangular form of $P(t)$ is clear from the exponential
  series and the fact that the lower-triangular nature is preserved
  under sums and products of matrices, as well as under taking
  limits. When $t \ne 0$, all eigenvalues of $P(t)$, which are the
  $p^{}_{nn} (t) = \ee^{- t \ts b^{}_{nn}}$, are simple because the
  exponential function is a bijection between $\RR$ and $\RR_{+}$.
  
  Since $B$ and $P(t)$ commute for all $t\in \RR$, they share the same
  set of eigenvectors, and hence the same, unique
  $\vartheta$-coefficients. The claimed formula then follows via
  Eq.~\eqref{eq:lem-2} in Lemma~\ref{lem:orth}, where the coefficients
  can be computed via the recursion from
  Proposition~\ref{prop:theta-rec}.
\end{proof}

When the matrix $B$ has the additional property that all row sums are
$0$, we know that $(1,1,\ldots , 1)^{\trans}$ is a right eigenvector
of $B$ with eigenvalue $0$, and hence also an eigenvector of every
$P(t)=\ee^{t B}$, then always with eigenvalue $1$, which we summarise
as follows.

\begin{coro}\label{coro:row-sum}
  When, under the condition of Theorem~\textnormal{\ref{thm:exp}}, the
  matrix\/ $B$ has zero row sums, its first row must be zero, and\/
  $P(t) = \ee^{t B}$ has row sums\/ $1$, for every\/ $t\in\RR$. \qed
\end{coro}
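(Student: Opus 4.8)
The plan is to establish three claims in sequence: that zero row sums force the first row of $B$ to vanish, that the all-ones vector is a right eigenvector of $B$ with eigenvalue $0$, and that consequently every $P(t)=\ee^{tB}$ has row sums equal to $1$.

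First I would address the first row. Since $B$ is lower triangular, its first row has only the entry $b^{}_{11}$ on the diagonal, with $b^{}_{1\ell}=0$ for $\ell>1$. The row-sum condition for $n=1$ then reads $\sum_{\ell} b^{}_{1\ell} = b^{}_{11} = 0$, so the entire first row is zero. This also shows $b^{}_{11}=0$, which is consistent with $0$ being an eigenvalue.

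Next I would verify the eigenvector claim. Writing $\mathbf{1} = (1,\ldots,1)^{\trans}$, the $n$-th component of $B\mathbf{1}$ is precisely the $n$-th row sum $\sum_{\ell} b^{}_{n\ell}$, which is $0$ by hypothesis. Hence $B\mathbf{1} = 0$, so $\mathbf{1}$ is a right eigenvector of $B$ with eigenvalue $0$.

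Finally, to pass to $P(t)$, I would apply the exponential series \eqref{eq:exp-series} or simply the spectral mapping property: since $B\mathbf{1}=0$, one has $B^{n}\mathbf{1}=0$ for all $n\geqslant 1$, so $\ee^{tB}\mathbf{1} = \mathbf{1} + \sum_{n\geqslant 1}\tfrac{t^{n}}{n!}B^{n}\mathbf{1} = \mathbf{1}$ for every $t\in\RR$. Thus $\mathbf{1}$ is a right eigenvector of $P(t)$ with eigenvalue $1$, which is exactly the statement that all row sums of $P(t)$ equal $1$. I expect no real obstacle here: the result is essentially a direct translation of the row-sum condition into the language of the eigenvector $\mathbf{1}$, and the only mildly delicate point is noting that the first-row entries above the diagonal already vanish by triangularity, so the condition collapses to $b^{}_{11}=0$.
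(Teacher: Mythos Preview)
Your proposal is correct and follows exactly the reasoning the paper sketches in the sentence immediately preceding the corollary: the all-ones vector is a right eigenvector of $B$ with eigenvalue $0$, hence of $P(t)$ with eigenvalue $1$, and the first-row claim falls out trivially from lower triangularity. The paper treats this as self-evident (hence the \qed), and your write-up simply spells out the details.
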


Let us quickly look at the recursion from a different angle, as known
from the Schur--Parlett algorithm; compare \cite[Sec.~9.2]{Higham}.
When $P(t)=\ee^{t B}$, we clearly have $P(t) B = B \ts P(t)$ for all
$t$.  Suppressing the notation for the explicit time dependence of
$P$, this implies
\begin{equation}\label{eq:high}
  p^{}_{nm} (b^{}_{nn} \nts - b^{}_{mm}) \, = \,
  b^{}_{nm} (p^{}_{nn} \nts - p^{}_{mm}) \, + \!
  \sum_{m < \udo{k} < n} \! (b^{}_{nk}\ts p^{}_{km}
    - p^{}_{nk}\ts b^{}_{km})
\end{equation}
for all $1\leqslant m \leqslant n \leqslant d$, together with
$p^{}_{ij} = b^{}_{ij} =0$ for all $i < j$.  Clearly, \eqref{eq:high}
gives no condition for $n=m$, where we have
$p^{}_{nn} (t) = \exp(t \ts b^{}_{nn})$. When $m=n-1$, we see the
condition
\begin{equation}\label{eq:high-2}
    p^{}_{n,n-1}  \, = \,
    b^{}_{n,n-1} \frac{p^{}_{n,n} \nts - p^{}_{n-1,n-1}}
    {b^{}_{n,n} \nts - b^{}_{n-1,n-1}}\ts .
\end{equation}
This is well defined, and can be inverted, because neither the
numerator nor the denominator of the fraction vanishes under our
conditions. Note that this agrees with the recursion from
Proposition~\ref{prop:theta-rec} for the elements in the first
subdiagonal.  In comparison, the recursion for the
$\vartheta$-coefficients primarily works row by row, where
$\vartheta^{}_{n}$ specifies how to add the $n\ts $th row, while the
iteration based on Eq.~\eqref{eq:high} progresses from one subdiagonal
to the next. One advantage of the $\vartheta$-coefficients is that
they only use the (implicit) information from the eigenvectors of $B$,
but nothing from $f(B)$. Unfortunately, this alternative does not seem
to favourable on the numerical side,\footnote{We thank Thorsten
  H\"{u}ls for some numerical experiments and comparisons with
  standard \textsc{MatLab} routines.}  which seems related to the ill
conditioning of the (underlying) diagonalisation matrix; compare
\cite[Sec.~4.5]{Higham}.

Let us nevertheless derive two consequences of this alternative,
recursive structure. Recalling that $b^{}_{kk} \ne b^{}_{\ell\ell}$
for all $k\ne \ell$, a close inspection of Eq.~\eqref{eq:high} reveals
that this recursion can be iterated until all $p^{}_{nm}$ with $n>m$
are expressed as linear combinations of the $p^{}_{kk}$, with
coefficients that are functions of the entries of $B$ only and hence
time independent. Since this works for every $t>0$, and since the
functions $\ee^{t \ts\ts b^{}_{nn}}$ are linearly independent, we get
the following consequence.

\begin{coro}\label{coro:exp}
  The recursion from Proposition~$\ref{prop:theta-rec}$ and the
  recursion implicitly defined by Eq.~\eqref{eq:high}, together with\/
  $p^{}_{nn} (t) = \ee^{t \ts b^{}_{\nts nn}}$ for\/
  $1\leqslant n \leqslant d$, give the same result, that is, the same
  expression of each matrix element\/ $p^{}_{nm} (t)$ as a linear
  combination of the diagonal elements of\/ $P(t)$, with
  time-independent coefficients. \qed
\end{coro}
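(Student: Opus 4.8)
The plan is to show that both recursions produce, for each pair $n>m$, a representation of the form $p^{}_{nm}(t) = \sum_{m\leqslant \udo{k} \leqslant n} c^{}_{nm}(k)\,\ee^{t\ts b^{}_{kk}}$ in which the coefficients $c^{}_{nm}(k)$ do not depend on $t$, and then to invoke the linear independence of the exponential functions $t \mapsto \ee^{t\ts b^{}_{kk}}$ to conclude that the coefficients delivered by the two recursions must agree term by term. For the $\vartheta$-recursion this representation is precisely the content of Theorem~\ref{thm:exp}, with $c^{}_{nm}(k) = \vartheta^{}_{n}(k,m)$, so all the actual work concerns the recursion implicitly defined by Eq.~\eqref{eq:high}.

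For that second recursion, I would first solve Eq.~\eqref{eq:high} for the off-diagonal element, which is legitimate because $b^{}_{nn} \ne b^{}_{mm}$ for $n\ne m$, to obtain
\[
  p^{}_{nm} \, = \, \frac{1}{b^{}_{nn}\nts - b^{}_{mm}}
  \Bigl( b^{}_{nm}(p^{}_{nn}\nts - p^{}_{mm}) \, + \!
  \sum_{m < \udo{k} < n} \! (b^{}_{nk}\ts p^{}_{km} - p^{}_{nk}\ts b^{}_{km}) \Bigr) \ts .
\]
I would then argue by induction on the subdiagonal distance $n-m\geqslant 1$. The base case $n-m=1$ is exactly Eq.~\eqref{eq:high-2}, which already writes $p^{}_{n,n-1}$ as a time-independent linear combination of $p^{}_{nn}$ and $p^{}_{n-1,n-1}$. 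For the inductive step, every off-diagonal term on the right-hand side, namely $p^{}_{km}$ and $p^{}_{nk}$ with $m<k<n$, has strictly smaller subdiagonal distance ($k-m<n-m$ and $n-k<n-m$, respectively), so by the inductive hypothesis each is already a time-independent linear combination of the appropriate diagonal entries $p^{}_{jj}$. Substituting and collecting terms expresses $p^{}_{nm}$ as $\sum_{m\leqslant \udo{j} \leqslant n}\gamma^{}_{nm}(j)\,p^{}_{jj}$, where the index ranges contributed by $p^{}_{km}$ and $p^{}_{nk}$ collapse exactly to $m\leqslant j\leqslant n$. Since the only $t$-dependence in the entire construction resides in the factors $p^{}_{jj}(t)=\ee^{t\ts b^{}_{jj}}$, all coefficients being built from the fixed entries $b^{}_{ij}$ and the nonvanishing denominators $b^{}_{nn}\nts-b^{}_{mm}$, the $\gamma^{}_{nm}(j)$ are indeed independent of $t$.

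Comparing the two representations then yields $\sum_{m\leqslant \udo{k} \leqslant n}\vartheta^{}_{n}(k,m)\,\ee^{t\ts b^{}_{kk}} = \sum_{m\leqslant \udo{k} \leqslant n}\gamma^{}_{nm}(k)\,\ee^{t\ts b^{}_{kk}}$ for all $t>0$. Because the diagonal entries $b^{}_{kk}$ are pairwise distinct, the exponentials $t\mapsto\ee^{t\ts b^{}_{kk}}$ are linearly independent functions on $(0,\infty)$, which forces $\gamma^{}_{nm}(k)=\vartheta^{}_{n}(k,m)$ for every admissible $k$. This is exactly the asserted coincidence of the two recursions.

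The main obstacle I anticipate is not conceptual but a matter of bookkeeping: one must verify that the induction is genuinely well-founded, i.e.\ that every off-diagonal term appearing in the solved form of Eq.~\eqref{eq:high} strictly decreases the subdiagonal distance, and that after substitution the summation range reassembles to precisely $m\leqslant k\leqslant n$. A secondary point deserving care is the linear-independence step, which is what upgrades ``both recursions produce some linear combination of the diagonals'' to ``they produce the same one,'' and which relies essentially on the simplicity of the spectrum of $B$.
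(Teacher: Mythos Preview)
Your proposal is correct and follows essentially the same route as the paper: the paper simply remarks that iterating Eq.~\eqref{eq:high} expresses every $p^{}_{nm}$ as a time-independent linear combination of the diagonal entries, and then invokes the linear independence of the functions $t\mapsto\ee^{t\ts b^{}_{kk}}$ (which holds because the $b^{}_{kk}$ are distinct) to force agreement with the $\vartheta$-coefficients from Theorem~\ref{thm:exp}. Your explicit induction on the subdiagonal distance $n-m$ is exactly the ``close inspection'' the paper alludes to but does not spell out.
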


Let us extend Theorem~\ref{thm:exp} to its complex counterpart, in
some generality. Recall that the matrix function of a diagonalisable
matrix $B$ can be defined for a given function $f$ when all
eigenvalues of $B$ lie in the domain of $f$. One then says that $f$ is
defined on the spectrum of $B$; compare \cite[Def.~1.2]{Higham}. The
statement is as follows.

\begin{coro}\label{coro:fun}
  Let\/ $B$ be a complex matrix in lower triangular form, with
  distinct entries on the diagonal, so the spectrum of\/ $B$ is simple
  and reads\/ $\sigma (B) = \{ b^{}_{11}, \ldots , b^{}_{dd}
  \}$. Let\/ $f$ be a function defined on\/ $\sigma (B)$. Then, the
  matrix\/ $f(B)$ is lower triangular and given by
\[
    \bigl( f(B) \bigr)_{nm} \, = \sum_{m\leqslant \udo{k} \leqslant n}
    f (b^{}_{kk}) \, \vartheta^{}_{n} (k,m) \, = \,
    \delta^{}_{nm} \ts f(b^{}_{nn})
    \; + \! \sum_{m\leqslant \udo{k} < n} \! \vartheta^{}_{n} (k,m)
    \bigl(  f(b^{}_{kk}) - f(b^{}_{nn}) \bigr) ,
\]   
with the recursively{\ts}-defined coefficients from
Proposition~$\ref{prop:theta-rec}$. Given\/ $B$, the\/
$\vartheta$-coefficients are then the same for all functions\/ $f$.
\end{coro}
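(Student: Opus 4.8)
The plan is to reduce the statement to the diagonalisation of $B$ together with the standard definition of a matrix function of a diagonalisable matrix, thereby mirroring the argument already used for Theorem~\ref{thm:exp}. Since $B$ has distinct diagonal entries, its spectrum is simple and $B$ is diagonalisable; by \cite[Thm.~2.3.1]{HJ} it can be written as $B = U \diag(b^{}_{11}, \ldots, b^{}_{dd})\, U^{-1}$ with both $U$ and $U^{-1}$ lower triangular. As $f$ is defined on $\sigma(B)$, the matrix function $f(B)$ is given, in accordance with \cite[Def.~1.2]{Higham}, by
\[
   f(B) \, = \, U \diag\bigl( f(b^{}_{11}), \ldots, f(b^{}_{dd}) \bigr)\, U^{-1} .
\]
As a product of a lower-triangular, a diagonal, and a lower-triangular matrix, $f(B)$ is then itself lower triangular, which settles the first assertion.

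Next I would read off the matrix elements directly. Writing out the product gives $\bigl( f(B) \bigr)_{nm} = \sum_{\udo{k}} u^{}_{nk}\, f(b^{}_{kk})\, \tilde{u}^{}_{km}$, and the triangular structure of $U$ and $U^{-1}$ forces $u^{}_{nk} = 0$ for $k>n$ and $\tilde{u}^{}_{km} = 0$ for $k<m$. Hence only the range $m\leqslant k \leqslant n$ contributes, and recognising $\vartheta^{}_{n}(k,m) = u^{}_{nk}\ts \tilde{u}^{}_{km}$ from \eqref{eq:theta-def} yields the first of the two claimed expressions. The second form then follows by isolating the $k=n$ term and invoking the first relation in \eqref{eq:lem-2}, exactly as in the proof of Theorem~\ref{thm:exp}: replacing $\sum_{m\leqslant \udo{k} < n} \vartheta^{}_{n}(k,m)$ by $\delta^{}_{nm} - \vartheta^{}_{n}(n,m)$ converts $\sum_{m\leqslant \udo{k}\leqslant n} f(b^{}_{kk})\, \vartheta^{}_{n}(k,m)$ into the stated difference of function values.

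For the final claim that the $\vartheta$-coefficients are the same for every $f$, I would note that by \eqref{eq:theta-def} they depend only on the diagonalising matrix $U$, hence only on $B$, and not at all on $f$; since the spectrum is simple, Proposition~\ref{prop:theta-rec} guarantees that they are unique and generated by the recursion stated there, irrespective of the (non-unique) scaling of the eigenvectors. I expect no serious obstacle, since this is a near-verbatim generalisation of Theorem~\ref{thm:exp} with $\ee^{t\ts b^{}_{kk}}$ replaced by $f(b^{}_{kk})$. The only point deserving a little care is the appeal to \cite[Def.~1.2]{Higham}: one must confirm that $f$ being defined on $\sigma(B)$ already suffices for $f(B)$ to exist and that the resulting matrix is independent of the particular choice of $U$ --- both standard for diagonalisable matrices, and reconciled with the present framework precisely by the uniqueness of the $\vartheta$-coefficients from Proposition~\ref{prop:theta-rec}.
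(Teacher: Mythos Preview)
Your proposal is correct and follows essentially the same route as the paper: diagonalise $B$ with lower-triangular $U$, apply the standard definition of $f(B)$ for diagonalisable matrices, read off the entries via \eqref{eq:theta-def}, and invoke Proposition~\ref{prop:theta-rec} for uniqueness and $f$-independence of the $\vartheta$-coefficients. The paper's own proof is considerably terser and leaves the explicit computation of $\bigl(f(B)\bigr)_{nm}$ and the passage to the second displayed form implicit, but your added detail (including the use of \eqref{eq:lem-2}) is exactly what is intended.
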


\begin{proof}
  Under the assumptions, the matrix $B$ is diagonalisable.  The
  lower-triangular form of $f(B)$ is a consequence of the explicit
  diagonalisation procedure, where the $\vartheta$-coefficients are
  unique and recursively determined by
  Proposition~\ref{prop:theta-rec}, and clearly independent of $f$.
\end{proof}

One difference to Theorem~\ref{thm:exp} is that, in general, the
spectrum of $f(B)$ need not be simple. In principle,
Corollary~\ref{coro:fun} applies to the matrix logarithm as
well. Since this is a particularly important function, we look at it
in some more detail.

\section{The principal logarithm of triangular
   matrices}\label{sec:log}

Let us turn to the principal logarithm of suitable lower-triangular
matrices. While we have Markov matrices in mind, the formulation can
be given for a slightly more general situation. First, let $P$ be a
real matrix with positive, simple eigenvalues only. In particular, $P$
is invertible and diagonalisable. Then, by Culver's theorem
\cite{Culver}, $P$ has a \emph{unique} real logarithm, which means
that the equation $P = \ee^B$ with $B$ real has precisely one
solution.

Here, we are interested in the case that $P$ is also lower triangular,
which is motivated by the occurrence of such structures in recently
studied processes from mathematical genetics \cite{fast}.  Then, the
diagonal elements of $P$ are positive and distinct.  If all
eigenvalues of $P=\one+A$ lie in $(0,1]$, as they do when $P$ is
Markov (under the above assumptions), the real logarithm is unique and
given by the convergent series from Eq.~\eqref{eq:log-series}, which
can be calculated in various ways. Once $B$ is determined, one can
consider the group $\{ P(t)= \ee^{t B}: t\in\RR\}$ of matrices, where
$P(t)^{-1} = P(-t)$ holds for every $t \in \RR$. The question now is
whether $B$ can be extracted from $P(t)$ with an arbitrary (but fixed)
$t\ne 0$ in a recursive way that resembles Theorem~\ref{thm:exp}. The
answer is affirmative, also already for semigroups, and can be stated
as follows.

\begin{theorem}\label{thm:log}
  Let\/ $\{ P(t) :t \geqslant 0\}$ be a continuous semigroup of real
  $d{\ts\times}d$-matrices that satisfy\/ \mbox{$P(0)=\one$} and\/
  $P(t+s) = P(t) P(s)$ for\/ $t,s\geqslant 0$.  Assume further that,
  for some\/ $t^{}_{0} > 0$, the matrix\/ $P(t^{}_{0})$ is lower
  triangular, with distinct diagonal elements and thus simple
  spectrum.  Then, there is a unique real matrix\/
  $B=(b^{}_{ij})^{}_{1\leqslant i,j \leqslant d}$ such that\/
  $P(t) = \ee^{t B}$ for all\/ $t\geqslant 0$. This matrix is lower
  triangular and has simple spectrum as well.
  
  Further, for any\/ $0 \ne t\in\RR$, one has 
\[
  b^{}_{nm}  \; = \sum_{m\leqslant \udo{k} \leqslant n} \!
  b^{}_{kk} \, \eta^{}_{n}(k,m) \, = \, 
  \delta^{}_{nm} b^{}_{nn}\, + \! \sum_{m\leqslant \udo{k} < n}\!
  \eta^{}_{n} (k,m) \bigl( b^{}_{kk} \nts - b^{}_{nn} \bigr)
\]
with\/ $b^{}_{nn} =  t^{-1} \log\ts \bigl(p^{}_{nn} (t )\bigr)$ for\/
$1\leqslant n \leqslant d$ and the coefficients
\[
  \eta^{}_{n} (k,m) \, = \, \delta^{}_{nk} \Bigl( \delta^{}_{nm}
   \, - \! \sum_{m\leqslant \udo{\ell} < n} \! \eta^{}_{n} (\ell,m) \Big)
   \, + \! \sum_{k\leqslant \udo{\ell} < n}
   \frac{p^{}_{n\ell}(t) \, \eta^{}_{\nts\ell} (k,m) }
        {p^{}_{kk} (t) - p^{}_{nn} (t)}  \ts ,
\]
where the last sum is absent when\/ $k=n$.  This encodes a complete
recursion for the\/ $\eta^{}_{n} (k,m)$ for
$1\leqslant m \leqslant k \leqslant n$, with the initial conditions\/
$\eta^{}_{n} (n,n)=1$ for\/ $1\leqslant n \leqslant d$.  Moreover,
these\/ $\eta$-coefficients are time independent and agree with the\/
$\vartheta$-coefficients from Eq.~\eqref{eq:theta-def}.
\end{theorem}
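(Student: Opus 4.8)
The plan is to split the argument into three logically independent parts: first to establish the existence, uniqueness and structure of the generator $B$; then to observe that the recursion stated for the $\eta$-coefficients is precisely Proposition~\ref{prop:theta-rec} applied to the matrix $P(t)$; and finally to identify these $\eta$-coefficients with the $\vartheta$-coefficients of $B$ by a shared-eigenvector argument, exactly as in the proof of Theorem~\ref{thm:exp}.

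First, since $\{P(t): t\geqslant 0\}$ is a continuous one-parameter semigroup on the finite-dimensional space $\Mat(d,\RR)$, differentiation at the origin provides a \emph{real} generator $B = \lim_{t\to 0^{+}} t^{-1}(P(t)-\one)$ with $P(t)=\ee^{tB}$ for $t\geqslant 0$; as $\ee^{tB}$ is defined for all real $t$, this extends to the group with $P(t)^{-1}=P(-t)$. The heart of this first part, and the step I expect to be the main obstacle, is to transfer the lower-triangular, simple-spectrum structure from $P(t^{}_{0})$ to $B$. Here I would use that $B$ commutes with $P(t^{}_{0}) = \ee^{t^{}_{0} B}$. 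Because $P(t^{}_{0})$ is lower triangular with distinct diagonal entries, it has simple spectrum, and its right eigenvectors assemble into a lower-triangular matrix $U$, as in the observation preceding Lemma~\ref{lem:orth}. Any matrix commuting with a simple-spectrum matrix is diagonalised by the same eigenvector basis, so $B = U\diag(b^{}_{11},\ldots,b^{}_{dd})U^{-1}$ with $U$, and hence $U^{-1}$, lower triangular; thus $B$ is lower triangular. Its diagonal entries are real (as $B$ is real) and satisfy $p^{}_{nn}(t^{}_{0}) = \ee^{t^{}_{0} b^{}_{nn}}$; since the left-hand sides are distinct positive reals and the exponential is injective on $\RR$, the $b^{}_{nn}$ are distinct, so $B$ has simple spectrum. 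Uniqueness follows either from uniqueness of the semigroup generator or, equivalently, from Culver's theorem \cite{Culver}, which applies because $P(t^{}_{0})$ has positive, simple eigenvalues.

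With $B$ identified as a real, lower-triangular matrix of simple spectrum, Lemma~\ref{lem:orth} and Proposition~\ref{prop:theta-rec} apply directly and yield its $\vartheta$-coefficients together with the representation $b^{}_{nm} = \sum_{m\leqslant \udo{k} \leqslant n} b^{}_{kk}\,\vartheta^{}_{n}(k,m)$; the second displayed form in the theorem then follows from the first relation in \eqref{eq:lem-2}. Next, fix $t\ne 0$. By Theorem~\ref{thm:exp}, the matrix $P(t)=\ee^{tB}$ is again lower triangular and, for $t\ne 0$, has simple spectrum with distinct diagonal entries $p^{}_{nn}(t)=\ee^{t\ts b^{}_{nn}}$. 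Hence Proposition~\ref{prop:theta-rec}, applied verbatim to $P(t)$ in place of $B$, produces coefficients $\eta^{}_{n}(k,m)$ obeying exactly the recursion stated in the theorem, with denominators $p^{}_{kk}(t)-p^{}_{nn}(t)\ne 0$ guaranteed by $b^{}_{kk}\ne b^{}_{nn}$ and $t\ne 0$. The diagonal relation $b^{}_{nn}=t^{-1}\log p^{}_{nn}(t)$ is immediate from $p^{}_{nn}(t)=\ee^{t\ts b^{}_{nn}}$ and the positivity of $p^{}_{nn}(t)$.

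It remains to prove $\eta^{}_{n}(k,m)=\vartheta^{}_{n}(k,m)$ together with time independence, which I would settle as in Theorem~\ref{thm:exp}: since $B$ and $P(t)$ commute and both have simple spectrum, they are diagonalised by a common eigenvector matrix $U$, and by definition \eqref{eq:theta-def} the $\vartheta$-coefficients depend only on $U$ and $U^{-1}$. Therefore the coefficients computed from $P(t)$ coincide with those computed from $B$ for every $t\ne 0$, which simultaneously gives the claimed equality $\eta^{}_{n}(k,m)=\vartheta^{}_{n}(k,m)$ and their independence of $t$. Substituting this identity and $b^{}_{kk}=t^{-1}\log p^{}_{kk}(t)$ into the representation from the preceding paragraph then delivers the stated formula for $b^{}_{nm}$, completing the argument.
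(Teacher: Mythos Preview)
Your proof is correct and follows the same overall architecture as the paper's: obtain the generator $B$, transfer the lower-triangular simple-spectrum structure from $P(t^{}_{0})$ to $B$ via commutation, then apply Proposition~\ref{prop:theta-rec} to $P(t)$ and identify $\eta=\vartheta$ through the shared eigenvector basis. The one noteworthy difference lies in how you secure a \emph{real} generator. The paper first invokes \cite[Thm.~2.9]{EN} to obtain a possibly complex $R$ with $P(t)=\ee^{tR}$, then separately builds a real logarithm $B^{}_{0}$ of $P(t^{}_{0})$ via Culver's theorem, extends $P(q\,t^{}_{0})=\ee^{qB^{}_{0}}$ to all non-negative rationals by iterated uniqueness of the real logarithm, passes to all $t$ by continuity, and finally matches $R=B$ by differentiating at $0$. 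You instead observe directly that in finite dimensions the generator is $B=\lim_{t\to 0^{+}} t^{-1}(P(t)-\one)$, a limit of real matrices and hence real; this bypasses the rational-density argument entirely and makes Culver optional (only as an alternative route to uniqueness). Your route is shorter and more elementary; the paper's route has the mild advantage of making explicit why each $P(t^{}_{0}/n)$ again has simple positive spectrum, a fact you use implicitly when invoking Theorem~\ref{thm:exp} for arbitrary $t\ne 0$ but which is in any case immediate once $B$ is known to be real with simple real spectrum.
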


\begin{proof}
  The first claim is a consequence of \cite[Thm.~2.9]{EN} in
  conjunction with Culver's theorem \cite{Culver} on the existence of
  a real logarithm. Indeed, the semigroup property (already with
  continuity only at $0$) guarantees the existence of a (generally
  complex) matrix $R$ such that $P(t) = \ee^{t R}$ holds for all
  $t\geqslant 0$. Since $R$ commutes with $P(t^{}_{0})$, which is
  simple and lower triangular, $R$ must be lower triangular as well
  \cite[Thm.~1.29]{Higham}.  This implies that $P(t)$ is lower
  triangular for all $t\geqslant 0$, via the power series of
  $\ee^{t R}$.

  Under our additional assumptions, we know that $P(t^{}_{0})$ has a
  unique real logarithm, $B^{}_{0}$ say, so
  $P(t^{}_{0}) = \ee^{B^{}_{0}}$. Lower triangularity of $B^{}_{0}$ is
  inherited from $P(t^{}_{0})$, by the same argument used for $R$, and
  all eigenvalues of $B_0$ are then real.  They are also simple, as a
  consequence of the spectral mapping theorem and the bijectivity of
  the real logarithm as a mapping from $\RR_{+}$ to $\RR$.  Clearly,
  $P( m \ts\ts t^{}_{0}) = \ee^{m B^{}_{0}}$ for any $m\in\NN$.  Now,
  consider $P(t^{}_{0} / n)$ for $n\in\NN$. This matrix is real and
  still has simple, positive eigenvalues, as the occurrence of $-1$
  can be excluded by also considering
  $P\bigl( \frac{t^{}_{0}}{2n}\bigr)$, which cannot have eigenvalues
  $\ii$ or $-\ii$.  Repeating the argument with the unique real
  logarithm shows that $P(t^{}_{0} / n) = \ee^{B^{}_{1}}$, where
  $B^{}_{0} = n B^{}_{1}$ by the previous argument. This leads to
  $P(q \ts\ts t^{}_{0}) = \ee^{q B^{}_{0}}$ for all non-negative
  rational numbers $q$, and thus to $P(t) = \ee^{t B}$ with
  $B=B^{}_{0}/t^{}_{0}$ by continuity. Consequently, we also get $R=B$
  by considering the (existing) derivative of $P(t)$ at $0$, and the
  claimed uniqueness is clear.

  To compute $B$ from any $P=P(t)$ for some $t>0$, we use our previous
  argument twice.  Write
  $P = U \diag (p^{}_{11}, \ldots, p^{}_{dd}) \ts U^{-1}$, where the
  columns of $U$ contain the right eigenvectors of $P(t^{}_{0})$, each
  of which is unique up to a non-zero overall constant. If we now
  define the $\eta\ts$-coefficients in complete analogy to
  Eq.~\eqref{eq:theta-def}, we get the relations
\[
    p^{}_{nm} \, = \sum_{m\leqslant \udo{k} \leqslant n} \!
    p^{}_{kk} \, \eta^{}_{n} (k,m)
\]  
together with the $\eta$-recursions as stated. Due to our assumptions,
$P(t^{}_{0})$ and $B$ share the same set of eigenvectors, and we can
use the $\eta$-coefficients also to calculate the $b^{}_{nm}$ from the
$b^{}_{kk}$, where we know the latter as
$b^{}_{kk} = t^{-1} \log (p^{}_{kk})$.
  
Now, we know that $B$ has simple spectrum, so our original argument
gives the relation among the elements of $B$ as stated in
Lemma~\ref{lem:orth}, with the $\vartheta$-coefficients from
Eq.~\eqref{eq:theta-def}. Since the eigenvectors are the same, each
being unique up to a non-zero multiplicative constant, which does
\emph{not} affect the value of the coefficients, we get the equality
of the $\eta\ts$- and the $\vartheta$-coefficients as claimed.
\end{proof} 
  
A special case is relevant and in use in probability theory, in
particular in the context of Markov embedding \cite{King,BS1}.  When
the matrix $P(t^{}_{0})$ has all row sums equal to $1$, it has
$(1,1,\ldots ,1)^{\trans}$ as an eigenvector with eigenvalue $1$, and
$B$ inherits this eigenvector, now with eigenvalue $0$, so all row
sums of $B$ are $0$. Then, by Theorem~\ref{thm:exp}, all $P(t)$ have
row sums $1$, and we have the following.

\begin{coro}\label{coro:rows}
  If, under the general assumptions of
  Theorem~\textnormal{\ref{thm:log}}, $P(t^{}_{0})$ has unit row sums,
  this holds for all\/ $P(t)$, and the real matrix\/ $B$ has zero row
  sums.  \qed
\end{coro}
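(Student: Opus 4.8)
The plan is to exploit the correspondence between unit row sums and a distinguished right eigenvector, and then to transfer this information between $P(t^{}_{0})$ and $B$ through the relation $P(t^{}_{0}) = \ee^{t^{}_{0} B}$ established in Theorem~\ref{thm:log}. First I would observe that the statement that $P(t^{}_{0})$ has unit row sums is equivalent to the assertion that the all-ones vector $v = (1,\ldots,1)^{\trans}$ satisfies $P(t^{}_{0})\ts v = v$, that is, $v$ is a right eigenvector of $P(t^{}_{0})$ for the eigenvalue $1$.

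Next, recall from the proof of Theorem~\ref{thm:log} that $B$ commutes with $P(t^{}_{0})$ and that $P(t^{}_{0})$ has simple spectrum. Consequently, the one-dimensional eigenspaces of $P(t^{}_{0})$ are invariant under $B$, so every eigenvector of $P(t^{}_{0})$ is also an eigenvector of $B$. In particular, $v$ is an eigenvector of $B$, say $B\ts v = \lambda\ts v$ for some $\lambda$. Applying the exponential relation then yields $v = P(t^{}_{0})\ts v = \ee^{t^{}_{0} B}\ts v = \ee^{t^{}_{0} \lambda}\ts v$, whence $\ee^{t^{}_{0} \lambda} = 1$. Since $B$ has only real eigenvalues, as established in Theorem~\ref{thm:log}, we have $\lambda \in \RR$, and because $t^{}_{0} > 0$, this forces $\lambda = 0$. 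Therefore $B\ts v = 0$, which is precisely the statement that all row sums of $B$ vanish.

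Finally, the claim that all $P(t)$ have unit row sums follows directly: from $B\ts v = 0$ we get $B^{n} v = 0$ for every $n\geqslant 1$, so $\ee^{t B}\ts v = v$ and hence $P(t)\ts v = v$ for all $t$. Alternatively, this is immediate from Corollary~\ref{coro:row-sum} applied to $B$ once we know that $B$ has zero row sums.

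I do not expect a genuine obstacle here, as all the substantive work—the existence and lower-triangular form of $B$, its simple and \emph{real} spectrum, and the coincidence of its eigenvectors with those of $P(t^{}_{0})$—is already supplied by Theorem~\ref{thm:log}. The only point requiring a moment's care is ruling out a nonzero eigenvalue $\lambda$ with $\ee^{t^{}_{0} \lambda} = 1$; this is handled by the reality of $\lambda$ together with $t^{}_{0} > 0$, which pins $\lambda$ down to $0$ uniquely.
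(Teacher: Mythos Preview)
Your argument is correct and follows essentially the same route as the paper: identify unit row sums with the all-ones eigenvector of $P(t^{}_{0})$, pass it to $B$ via the shared eigenvectors, and then invoke Corollary~\ref{coro:row-sum} (or the direct series computation) for all $P(t)$. The only difference is that you spell out explicitly why the corresponding eigenvalue of $B$ must be $0$ (using $\ee^{t^{}_{0}\lambda}=1$ together with $\lambda\in\RR$), whereas the paper simply asserts that $B$ inherits the eigenvector ``now with eigenvalue $0$''.
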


More generally, when we start from a simple matrix $P$, which may be
real or complex, one can use Corollary~\ref{coro:fun} for the
logarithm as well. The subtle point here is that one has to select an
appropriate branch of the (complex) logarithm, which impacts the
outcome. This is a subtle issue, and we refer to \cite[Ch.~11]{Higham}
for a detailed exposition.

\section*{Acknowledgements}

It is our pleasure to thank Wolf-J\"{u}rgen Beyn and Thorsten H\"{u}ls
for helpful discussions, comments and numerical experiments.  This
work was supported by the German Research Foundation (DFG, Deutsche
Forschnungsgemeinschaft), within the CRC 1283 at Bielefeld University
(project ID 317210226).

\smallskip

\end{document}